\documentclass[11pt,oneside,english]{amsart}
\usepackage[T1]{fontenc}
\usepackage{lmodern}
\usepackage[latin9]{inputenc}
\usepackage{enumerate}
\usepackage{verbatim}
\usepackage{mathtools}
\usepackage{amstext}
\usepackage{amsthm}
\usepackage{amssymb}
\usepackage{linegoal}
\usepackage{stmaryrd}

\usepackage[backref=page,colorlinks,citecolor=blue,bookmarks=true]{hyperref}\hypersetup{linkcolor=blue,  citecolor=blue, urlcolor=blue}
\usepackage{amsrefs}
\usepackage{amsthm}

\date{}

\usepackage{babel}

\newtheorem{theorem}{Theorem}
\newtheorem{proposition}[theorem]{Proposition}
\newtheorem{lemma}[theorem]{Lemma}
\newtheorem{rem}[theorem]{Remark}

\DeclareMathOperator{\SL}{SL}%
\DeclareMathOperator{\GL}{GL}%

\begin{document}

\author[E.\ Breuillard]{Emmanuel Breuillard} \address{Emmanuel Breuillard\hfill\break 	Mathematical Institute \hfill\break Oxford OX1 3LB, United Kingdom} \email{emmanuel.breuillard@maths.ox.ac.uk}
\author[T. Gelander]{Tsachik Gelander} \address{Tsachik Gelander\hfill\break 	Department of Mathematics \hfill\break 	Northwestern University,  Evanston, USA} \email{tsachik.gelander@gmail.com}

\title{
On dense free subgroups of Lie groups --- revisited}

\begin{abstract}
    The main purpose of this note is to correct an erroneous argument given in the proof of \cite[Proposition 2.7]{breuillard-gelander}. This does not affect the validity of any of the statements in \cite{breuillard-gelander}.
    We also provide details which were missing in the proof of \cite[Proposition 3.3]{breuillard-gelander}, and correct  \cite[Example 2.2]{breuillard-gelander}.
    
 \end{abstract}

\maketitle

In the proof of \cite[Proposition 2.7]{breuillard-gelander} it is erroneously stated that the commutator subgroup of a finitely generated dense subgroup $\Gamma$ of a connected Lie group $G$ contains a finitely generated subgroup, which is dense in the  commutator subgroup $[G,G]$. This is not true. In fact, there are such examples where every finitely generated subgroup of $[\Gamma,\Gamma]$ is discrete in $[G,G]$, see \cite{chirvasitu}.

The purpose of this note is to give a valid proof of \cite[Proposition 2.7]{breuillard-gelander}. Actually, we will prove the following slightly stronger statement:

\begin{proposition}\label{prop:main} 
Let $G$ be a connected Lie group and $\Gamma$ 
a finitely generated dense subgroup. Then
$\Gamma$ contains a dense subgroup on $d+d_1\le 2d$ generators, where $d=\dim(G)$ and $\mathbb{R}^{d_1}$ is the maximal Euclidean quotient of $G$.
\end{proposition}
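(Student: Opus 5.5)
We prove the statement by induction, splitting $G$ along its commutator subgroup. The key input is a Zassenhaus-type fact. Let $U$ be a small identity neighbourhood of $G$ on which $\log$ is defined. Then there is a neighbourhood $V\subset U$ of $1$ such that, whenever $x_1,\dots,x_k\in V$ and $\log x_1,\dots,\log x_k$ span $\mathfrak g$ as a vector space, the closure $H=\overline{\langle x_1,\dots,x_k\rangle}$ is open. The reason is that the connected component $H^0$ is a Lie subgroup whose Lie algebra $\mathfrak h$ contains all the $\log x_i$; this holds once $V$ is chosen so small that elements of $H\cap V$ near $1$ lie in $\exp(\mathfrak h)$. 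Hence $\mathfrak h=\mathfrak g$ and $H=G$ because $G$ is connected. Since $\Gamma$ is dense, $\Gamma\cap V$ is dense in $V$. So we can pick $d$ elements $\gamma_1,\dots,\gamma_d\in\Gamma\cap V$ whose logarithms form a basis of $\mathfrak g$. By the fact above, $\langle\gamma_1,\dots,\gamma_d\rangle$ is dense in $G$.

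This shows that $d$ generators already suffice, which seems to contradict the bound $d+d_1$. The gap is that the Zassenhaus fact, as stated, is false. The closure of a group generated by small elements need not have Lie algebra containing their logarithms. In $\mathbb R/\mathbb Z$, for instance, a small element may generate a finite group. So the approach must handle the abelian directions separately, and this is where the $d_1$ extra generators come from.

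The revised plan has three steps.

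\emph{Step 1 (abelianization).} Let $\pi:G\to A=G/\overline{[G,G]}$. Then $A$ is a connected abelian Lie group, $A\cong\mathbb R^{d_1}\times\mathbb T^{m}$. Since $\pi(\Gamma)$ is dense in $A$, we want to choose elements of $\Gamma$ whose images generate a dense subgroup of $A$. For the torus part this costs at most one element, beyond what is needed on $\mathbb R^{d_1}$, and the torus part can be absorbed as explained below. For the $\mathbb R^{d_1}$ part, a dense subgroup needs $d_1+1$ generators. We fold these into the count by using $d_1$ "extra" elements of $\Gamma$ together with elements already chosen.

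\emph{Step 2 (derived part).} Choose $\gamma_1,\dots,\gamma_d\in\Gamma$ very close to $1$ in generic position, so that their logarithms span $\mathfrak g$. Then $H=\overline{\langle\gamma_i\rangle}$ has the following properties:
\begin{itemize}
\item by the correct form of the Zassenhaus/Kazhdan--Margulis lemma, $H^0$ is nilpotent-free of obstruction in the semisimple and derived directions;
\item its Lie algebra contains the brackets $[\log\gamma_i,\log\gamma_j]$, which for generic choices span $[\mathfrak g,\mathfrak g]$ together with iterates;
\item hence $H^0\supset\overline{[G,G]}^0$.
\end{itemize}
Concretely, one shows that $\mathfrak h$ is an ideal containing $[\mathfrak g,\mathfrak g]$ by an openness-of-genericity argument. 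The set of tuples in $V^d$ for which $\mathfrak h\supseteq[\mathfrak g,\mathfrak g]$ contains a dense open set, because commutators $[\gamma_i,\gamma_j]$ of small elements have logarithms close to $[\log\gamma_i,\log\gamma_j]$ and do not suffer the torsion issue at second order.

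\emph{Step 3 (combine).} Once $H\supseteq\overline{[G,G]}^0$, density of the whole group reduces to density of its image in $A$ modulo a finite group. We add $d_1$ further elements $\delta_1,\dots,\delta_{d_1}\in\Gamma$ such that $\pi(\gamma_1),\pi(\delta_1),\dots,\pi(\delta_{d_1})$, together with the $\pi(\gamma_i)$, generate a dense subgroup of $A$. This is possible because $\pi(\Gamma)$ is dense, so we can perturb the choices within $\Gamma$ to make the required real numbers rationally independent. Components are handled by noting that $G$ is connected, so a subgroup whose closure contains $\overline{[G,G]}^0$ and is dense mod it is dense. This gives $d+d_1$ generators.

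The main obstacle is Step 2: rigorously proving that small generic elements of $\Gamma$ generate a closure containing $\overline{[G,G]}$. Compact abelian factors of $\overline{[G,G]}$ and the failure of Zassenhaus for abelian directions make this delicate. I would first try to prove it via the Kazhdan--Margulis--Zassenhaus neighbourhood, where the closure of a group generated by elements in $V$ has nilpotent identity component. I would then argue that, for a generic choice, that nilpotent group projects onto all simple factors of the Levi part and onto $[\mathfrak r,\mathfrak g]$.
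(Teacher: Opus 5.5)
\textbf{There is a genuine gap: Step 2 carries the entire argument, and it is unproved.} You acknowledge this yourself, and the reasons you give for it are incorrect.

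The Lie algebra $\mathfrak h$ of $H=\overline{\langle\gamma_1,\dots,\gamma_d\rangle}$ need not contain the brackets $[\log\gamma_i,\log\gamma_j]$. In $H_3(\mathbb R)$, take small $\gamma_1,\gamma_2$ with independent horizontal parts and set $\gamma_3=\gamma_1\gamma_2$. The three logarithms form a basis of $\mathfrak g$, yet $\langle\gamma_1,\gamma_2,\gamma_3\rangle$ is a lattice, so $\mathfrak h=0$.

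The discreteness/torsion problem you noticed at first order recurs at every order. It is arithmetic rather than metric, so no open genericity condition removes it. In the quotient of $H_3(\mathbb R)$ by a lattice in its centre, the triples in $V^3$ that generate a \emph{discrete} subgroup are dense in $V^3$. (Take horizontal parts spanning a lattice, and all commutators and central relators rational modulo $1$.) So the good set has empty interior, and the density of $\Gamma^3$ in $V^3$ gives you nothing.

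Kazhdan--Margulis--Zassenhaus does not help either. It is a statement about discrete subgroups, and the closure of a group generated by small elements need not have nilpotent identity component: two small non-commuting rotations generate a dense subgroup of $SO(3)$.

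Steps 1 and 3 have the same flaw in milder form. You cannot ``perturb within $\Gamma$'' to get rational independence, because $\Gamma$ is fixed.
\begin{itemize}
\item If $\pi(\Gamma)=\mathbb Z[\sqrt2]^2\subset\mathbb R^2$, no three elements generate a dense subgroup. So inside $\Gamma$ the Euclidean part may need $2d_1$ generators, not $d_1+1$.
\item If $\pi(\Gamma)\subset\mathbb T^2$ is generated by the images of $(\sqrt2,0)$ and $(0,\sqrt2)$, no single element generates a dense subgroup. So the torus part can cost more than one element.
\end{itemize}
The count $\dim A+d_1$ is correct, but it needs a structural argument: a cocompact lattice in the Euclidean directions, then induction on the resulting torus using elements of infinite order with connected closure. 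You also never check that these choices are compatible with the smallness you impose in Step 2.

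\textbf{The missing idea is to reverse the order and stop relying on smallness in the solvable directions, where it works against you.} The paper proceeds as follows.
\begin{enumerate}
\item Split off the topologically perfect part $G_k$. There, \cite[Corollary 2.5]{breuillard-gelander} gives the Zassenhaus-type statement you wanted, with at most $\dim G_k$ elements.
\item Reduce the solvable case to the metabelian one, since density can be tested modulo $G_2$.
\item \emph{First} pick $\Delta\le\Gamma$ on $\dim(G/G_1)+d_1$ generators that is dense modulo the abelian normal subgroup $G_1=\overline{[G,G]}$.
\item Take a non-central $h_1\in[\Gamma,\Gamma]$ and set $N=\overline{\langle\Delta,h_1\rangle}\cap G_1$. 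Then $N$ is normal in $G$, because its normaliser contains the dense set $\Delta G_1$.
\item Because $G_1$ is abelian and $\Delta G_1$ is dense, the $\langle\Delta,h_1\rangle$-conjugacy class of $h_1$ is dense in its $G$-conjugacy class. That class is connected and non-trivial, hence non-discrete, so $\dim N>0$.
\item Pass to $G/N$ and induct on $\dim G_1$. This costs at most $\dim G_1$ further elements.
\end{enumerate}
Density modulo $G_1$ is exactly what makes conjugation by a finitely generated subgroup as effective as conjugation by $G$. That is the mechanism your Step 2 lacks.
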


\begin{rem}
We remark that \cite[Theorem 1.3]{breuillard-gelander} and \cite[Theorem 5.7]{toti} rely on \cite[Proposition 2.7]{breuillard-gelander}, i.e. on Proposition \ref{prop:main} above. The proofs of these theorems written in \cite{breuillard-gelander,toti} are valid thanks to the correction in the proof of Proposition \ref{prop:main} provided below.
\end{rem}

The problem in the original argument is circumscribed to the second paragraph of the proof of \cite[Proposition 2.7]{breuillard-gelander}, which deals with solvable non-abelian Lie groups. For the sake of completeness we shall also review the reduction to that case.

\begin{proof} Let $G$ be a connected Lie group. Denote $G_0=G$ and $G_{i+1}=\overline{[G_i,G_i]}$ where $[G_i,G_i]$ is the commutator group of the group $G_i$. As $\dim(G)$ is finite, this sequence stabilizes at some finite step $k$. Then the group $G_k$ is connected and topologically perfect (i.e. has dense commutator subgroup) and $G/G_k$ is connected and solvable. Note also that as $\Gamma$ is dense in $G$ its commutator subgroup $[\Gamma,\Gamma]$ is dense in $G_1$ and by induction $\Gamma\cap G_i$ is dense in $G_i$ for $i=1,\ldots,k$.
In view of \cite[Corollary 2.5]{breuillard-gelander} there are $t$ elements in $\Gamma\cap G_k$ which generate a dense subgroup in $G_k$, where $t$ is the minimal number of generators for the Lie algebra of $G_k$ and in particular $t\le\dim (G_k)$. Thus we need to show that there are $\dim(G/G_k)+d_1$ elements in $\Gamma$ that generate a subgroup with a dense projection to $G/G_k$. Passing to $G/G_k$, this allows us to restrict to the case where $G$ is a connected solvable Lie group.

Assume first that $G$ is abelian. Then $G\cong\mathbb{R}^{d_1}\times\mathbb{T}^{d_2}$, where $\mathbb{T}$ denotes a one-dimensional compact torus $\mathbb{T}\simeq \mathbb{R}/\mathbb{Z}$. We can then
find $d_1$ elements in $\Gamma$ which generate a discrete cocompact subgroup.
Dividing by this subgroup, we may assume that $G$ is a torus. Then we
argue by induction on $\dim G$. As $\Gamma$ is finitely generated and dense
it contains an element $\gamma$ of infinite order. By replacing $\gamma$ by some
power $\gamma^j$ if necessary, we may assume that $\gamma$ generates a subgroup
with connected closure $C$ of positive dimension. By induction, the proposition
holds for $G/C$. Lift arbitrarily to $G$ a set of $\dim G/C$
generators for a dense subgroup of the image of $\Gamma$ in $G/C$. Together
with $\gamma$ they generate a dense subgroup in $G$.

We are left with the case where $G$ is solvable and non-abelian. 
In a connected solvable Lie group, density of a subgroup is equivalent to density of its image modulo the second commutator $G_2$ 
(cf. \cite[Theorem 6.3]{BGSS}). 
If $G$ is nilpotent, then  density of a subgroup is equivalent to density in $G/G_1$, which reduces to the abelian case considered above. 
Thus, we may assume $G$ is metabelian and not nilpotent. 

Since $G/G_1$ is abelian, we may find $m:=\dim(G/G_1)+d_1$ elements $g_1,...,g_m$ in $\Gamma$, which, modulo $G_1$, generate a dense subgroup. 
We will show that we may pick $k\le \dim(G_1)$ additional elements $h_1,\ldots,h_{k}\in [\Gamma,\Gamma]\subset \Gamma$, which, together with the $g_i$'s generate a dense subgroup in $G$. Since $m+k\le \dim(G)+d_1$ this will prove the proposition.
The construction below will work for any subgroup $\Delta\le\Gamma$ which projects densely to $G/G_1$ and produces $\langle \Delta, h_1,\ldots,h_k\rangle$ which is dense in $G$. Concretely, below, one may take $\Delta=\langle g_1,\ldots,g_m\rangle$.


We will make use of the following simple lemma: 

\begin{lemma}
Let $G$ be a connected group, and $g \in G$  a non-central element. Then the conjugacy class $g^G$ of $g$ is not discrete in $G$. 
\end{lemma}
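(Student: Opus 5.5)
We argue by contradiction, using only connectedness of $G$ and continuity of the conjugation map. Consider the continuous map $c\colon G\to G$, $c(x)=xgx^{-1}$, whose image is the conjugacy class $g^G$. Suppose $g^G$ is discrete in $G$, i.e.\ discrete in the subspace topology, so that every point of $g^G$ is isolated in $g^G$.

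A discrete space has only singletons as connected subsets. The image $c(G)=g^G$ of the connected space $G$ under the continuous map $c$ is connected. A connected discrete space is a single point, so $g^G=\{g\}$. This means $xgx^{-1}=g$ for all $x\in G$, so $g$ is central, contradicting the hypothesis.

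The key point is to use discreteness in the induced topology of the subspace $g^G$, not closedness or local finiteness in $G$. That is exactly what the connectedness argument needs. The argument therefore works for any connected topological group, and in particular for the connected Lie groups in the proof of Proposition~\ref{prop:main}.

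The only potential subtlety is the interpretation of ``discrete''. If one instead meant that $g^G$ is a discrete closed subset of $G$, that is a stronger hypothesis, so the same proof applies verbatim. No step is a genuine obstacle.
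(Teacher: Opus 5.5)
Your proof is correct and follows the same route as the paper, which notes in one line that $g^G$ is connected (as the continuous image of $G$ under $x\mapsto xgx^{-1}$) and has more than one point because $g$ is non-central, so it cannot be discrete. Your contradiction argument is that reasoning written out in more detail.
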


\begin{proof}
Indeed $g^G$ is connected and non-trivial, hence non-discrete.
\end{proof}
Since $\Gamma$ is dense and $G$ is not nilpotent, we may
pick $h_1$ in $[\Gamma,\Gamma]$ which is not central in $G$.
Set
$$
 N:= \overline{\langle \Delta,h_1\rangle}\cap G_1.
$$
Note that $N$ is normal in $G$. Indeed, since $G_1$ is both normal in $G$ and abelian, the normalizer $N_G(N)$ contains both $\Delta$ and $G_1$ and hence $\Delta G_1$ which is dense in $G$. 

Let $H$ be the closure of $\langle \Delta,h_1\rangle$.
Since $G_1$ is abelian and $HG_1$ is dense in $G$, the conjugacy class of $h_1$ in $H$ is dense in the conjugacy class of $h_1$ in $G$, which by the lemma above is non-discrete. But this conjugacy class is entirely contained in $N=H\cap G_1$. By Cartan's theorem, being a closed subgroup, $N$ is a Lie group. Thus, $N$ has positive dimension. 

If $N=G_1$ we may stop here, while if $N$ is a proper subgroup of $G_1$ we may
replace $G$ by $G/N$. Note that the abelianization $G/G_1\cong(G/N)/(G_1/N)$ does not change and that $(G/N)_1\cong G_1/N$ while $\dim(G_1/N)\le \dim(G_1)-1$.
We may now proceed by induction on the dimension.
That is, we may produce $h_2,\ldots,h_k\in [\Gamma,\Gamma]$, with $k-1\le\dim(G_1/N)$, whose image in $G/N$ together with $\Delta N$ generate a dense subgroup of $G/N$. This implies that $\langle \Delta, h_1, ..., h_k \rangle$ is dense. 
Indeed, let $K$ be the closure of $\langle \Delta, h_1, ..., h_k \rangle$. Since $N \leq H=\overline{\langle \Delta,h_1\rangle} \leq K$, the image of $K$ in $G/N$ is closed. By construction, this image is dense, hence equal to $G/N$. Therefore $K=G$.
\end{proof}

\begin{rem}
The proof above gives a bound which is occasionally better than the $d+d_1$ bound stated in the Proposition. That bound is 
$$
 dim(G/G_2)+d_1+t,
$$
where, as above, $G/G_2$ is the largest metabelian Lie quotient of $G$ and $t$ is the minimal number of generators for the Lie algebra of the topologically perfect radical $G_k$. Obviously $\dim(G/G_2)+t\le \dim(G)$ and in most cases this inequality is strict. The `worst' case scenario is, of course, the Euclidean case $G\cong\mathbb{R}^d$ for which the optimal bound is $2d$ as stated in general in \cite[Proposition 2.7]{breuillard-gelander}. In all other cases $\dim(G/G_2)+d_1+t<2\dim(G)$.
\end{rem}

\noindent{\bf An additional correction.}
We take this opportunity to correct an additional inaccuracy.
Example 2.2 in \cite{breuillard-gelander} attempts to produce a Lie group $G$ whose Lie algebra is not $2$-generated, but every dense subgroup of $G$ admits a dense $F_2$. 
In \cite[Example 2.2]{breuillard-gelander} we suggested $G=(\widetilde{\SL_2(\mathbb{R})}\times \mathbb{T})/\langle (a,\alpha)\rangle$, where $\widetilde{\SL_2(\mathbb{R})}$ is the universal cover of $\SL_2(\mathbb{R})$, $\mathbb{T}\cong S^1$ is the one-dimensional torus, $a$ is a generator for the center of $\widetilde{\SL_2(\mathbb{R})}$ and $\alpha$ is a generator of a dense subgroup of $\mathbb{T}$.
The problem with this example is that the Lie algebra $\text{Lie}(G)\cong \text{sl}_2(\mathbb{R})\oplus\mathbb{R}$ is actually $2$-generated. However, one can correct the example by taking $G$ to be 
$$
 G=(\widetilde{\SL_2(\mathbb{R})}\times \mathbb{T}^3)/\langle (a,\alpha)\rangle,
$$
where $\mathbb{T}^3$ is the $3$-dimensional torus and $\alpha$ is a generator of a dense subgroup of $\mathbb{T}^3$. The Lie algebra $\text{sl}_2(\mathbb{R})\oplus\mathbb{R}^3$ is not $2$-generated.

It is also worth mentioning that although this example shows that the converse to \cite[Theorem 2.1]{breuillard-gelander} does not hold for topologically perfect Lie groups in general, it does hold for perfect Lie groups. Indeed, in a perfect Lie group elements near the identity generate a dense subgroup if and only if their logarithms generate the Lie algebra. This follows from the fact that  perfect Lie groups do not have proper dense virtual Lie subgroups (see e.g. \cite[Chapter 2 \S 5]{vinberg}).


\bigskip

\noindent{\bf Contracting projective transformations.} In this last section, we supply the missing proof of one case of Proposition 3.3 from \cite{breuillard-gelander}. Let $k$ be a local field. A key concept used in \cite{breuillard-gelander} is that of $\epsilon$-contracting projective transformation. This is a $g \in \GL_n(k)$ such that there is a projective hyperplane $H_g$ and a point $v_g$ in projective space $\mathbb{P}(k^n)$ with the property that $g$ maps the complement of the $\epsilon$-neighborhood of $H_g$ into the $\epsilon$-ball around $v_g$, where the distance is the usual projective distance:
$$d([u],[v])= \frac{\|u \wedge v\|}{\|u\|\|v\|}$$
for two vectors $u,v \in k^n\setminus \{0\}$

One main result regarding $\epsilon$-contracting transformations is that they can be essentially characterized by the size of the ratio between their first and second singular values. This fact (Proposition 3.3. in \cite{breuillard-gelander}) was only given a full proof  in \cite{breuillard-gelander} when $k$ is non-archimedean. The archimedean case was left to the reader. Since we have received queries for a full proof, we thought it would be helpful to include it in print and take this opportunity to do so now.  

Let us recall the statement of Proposition 3.3. in \cite{breuillard-gelander}. To $g \in \GL_n(k)$ one associates a diagonal matrix $a_g=diag(a_1(g),\ldots,a_n(g))$ with $|a_1|\ge \ldots \ge |a_n|$ such that $g=k_ga_gk'_g$, where $k_g,k'_g$ belong to the compact subgroup of $\GL_n(k)$ that stabilizes the standard Euclidean norm when $k=\mathbb{R}$ (or the standard Hermitian norm when $k=\mathbb{C}$ or the sup norm when $k$ is non-archimedean). When $k=\mathbb{R}$ or $\mathbb{C}$ the coefficients $a_i(g)$ are real and positive (the singular values of $g$).

\begin{proposition}
\label{g contracts}Let $\epsilon <\frac{1}{4}$. If $|\frac{a_{2}(g)}{a_{1}(g)%
}|\leq \epsilon ^{2}$, then $[g]$ is $\epsilon $-contracting. More
precisely, writing $g=k_{g}a_{g}k_{g}^{\prime }$, one can take $H_{g}$ to be
the projective hyperplane spanned by $\{k^{\prime
}{}_{g}^{-1}(e_{i})\}_{i=2}^{n}$, and $v_{g}=[k_{g}(e_{1})]$.

Conversely, suppose $g$ is $\epsilon $-contracting and $k$ is
non-archimedean with uniformizer $\pi $ (resp. archimedean), then $|\frac{%
a_{2}(g)}{a_{1}(g)}|\leq \frac{\epsilon ^{2}}{\left| \pi \right| }$ (resp. $|%
\frac{a_{2}(g)}{a_{1}(g)}|\leq 4\epsilon ^{2}$).
\end{proposition}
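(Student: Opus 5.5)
Since $k_g,k'_g$ are isometries of the norm and therefore of the projective distance, I will first reduce both directions to the diagonal case $g=a_g=\mathrm{diag}(a_1,\dots,a_n)$, with $H=[\mathrm{span}(e_2,\dots,e_n)]$ and $v=[e_1]$. The general case follows by transporting $H$ and $v$ by $k'^{-1}_g$ and $k_g$. Throughout I work with $k=\mathbb{R}$ or $\mathbb{C}$ and the Euclidean/Hermitian norm, which is the case left to the reader.

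\emph{Direct implication.} Let $[u]$ be at distance $\ge\epsilon$ from $H$ with $\|u\|=1$, and write $u=u_1e_1+u'$ where $u'\in\mathrm{span}(e_2,\dots,e_n)$. The distance from $[u]$ to the hyperplane $H$ equals $|u_1|$, since it is the norm of the component orthogonal to $H$. Hence $|u_1|\ge\epsilon$. Next I compute
$$d([a_gu],[e_1])=\frac{\|a_g u'\|}{\|a_gu\|}\le\frac{a_2\|u'\|}{a_1|u_1|}\le\frac{\epsilon^2}{\epsilon}=\epsilon,$$
using $\|a_gu\|\ge a_1|u_1|$ and $\|a_gu'\|\le a_2\|u'\|\le a_2$. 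This gives the first statement, and $\epsilon<1/4$ is not even needed here.

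\emph{Converse.} Suppose $g=a_g$ maps the complement of the $\epsilon$-neighbourhood of some hyperplane $H'$ into the $\epsilon$-ball around some point $[w]$. The hyperplane $H'$ is arbitrary, so I cannot use the diagonal structure directly. The idea is to find two unit vectors in the plane $P=\mathrm{span}(e_1,e_2)$, both far from $H'$, whose images are far apart projectively if $a_2/a_1$ is not small. The intersection $P\cap H'$ contains at least a line $[p]$; if $P\subset H'$ I instead use $P'=\mathrm{span}(e_1,e_2)$ tilted appropriately. The simplest robust route is to pick a unit vector $q\in P$ orthogonal to $p$, so that $d([q],H')\ge d([q],[p])$ restricted to $P$. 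More cleanly, the distance from $[x]$ to $H'$ is $|\langle x,\nu\rangle|$ for a unit normal $\nu$, and on $P$ this is a linear functional of norm $\le 1$.

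Consider $x_\pm=(q\pm p)/\sqrt2$ or, more generally, $x_\theta=\cos\theta\, q+\sin\theta\, p$ in the plane $P$. Its distance to $H'$ is $|\cos\theta|\,|\langle q,\nu\rangle|$. To guarantee that this is at least $\epsilon$ for two well-separated angles, I need $|\langle q,\nu\rangle|$ not too small. This is the step I expect to be the main obstacle, because when $P$ is nearly contained in $H'$ the plane is useless. To get around this I will not fix $P$ in advance. Instead I choose a 2-plane $Q$ on which the restriction $g|_Q$ still has singular-value ratio at least $a_2/a_1$, and on which $\nu$ has a large component. One candidate is $Q=\mathrm{span}(\nu',e_1)$ or $\mathrm{span}(\nu', e_2)$, with $\nu'$ the projection of $\nu$ to $P$. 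The fallback is a direct argument: if every unit vector of $P$ were $\epsilon$-close to $H'$, then $P$ lies within $\epsilon$ of $H'$, and one perturbs $P$ within $\mathrm{span}(e_1,e_2,\nu)$.

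Once two unit vectors $x,y$ in such a plane satisfy $d([x],H'),d([y],H')\ge\epsilon$ and $d([x],[y])$ is bounded below, the contraction hypothesis gives $d([gx],[gy])\le 2\epsilon$. On the other hand, a $2\times2$ computation gives
$$d([gx],[gy])=\frac{|\det g|_Q|\,\|x\wedge y\|}{\|gx\|\|gy\|}\ge \frac{a_1a_2\|x\wedge y\|}{a_1^2}=\frac{a_2}{a_1}\|x\wedge y\|.$$
Choosing $x,y$ orthogonal with $\|x\wedge y\|=1$ would yield $a_2/a_1\le 2\epsilon$. To reach the sharper bound $4\epsilon^2$, I will instead take $x$ and $y$ almost parallel to the direction $e_2$. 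Their images then separate by roughly $(a_2/a_1)\cdot\|x\wedge y\|/(\text{small})^2$, and optimizing the angle against the constraint $d(\cdot,H')\ge\epsilon$ (which is where $\epsilon<1/4$ enters) gives $a_2/a_1\le 4\epsilon^2$. I will carry out that optimisation in the explicit coordinates of the plane.
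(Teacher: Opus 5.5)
Your reduction to $g=a_g$ and your proof of the direct implication are correct; the paper does not redo that half. The converse, however, is not proved. The bound $a_2/a_1\le 4\epsilon^2$, which is its whole content, is only announced, and your route cannot reach it without an ingredient you do not supply.

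Your displayed lower bound uses $|\det g|_Q|\ge a_1a_2$. But $\|gx\wedge gy\|\le\|\Lambda^2 g\|\,\|x\wedge y\|=a_1a_2\|x\wedge y\|$ for all $x,y$, with equality in general only when $\mathrm{span}(x,y)=P=\mathrm{span}(e_1,e_2)$. So the argument is tied to $P$ and planes close to it. Tilting $Q$ far enough to nearly contain $\nu$ can make $|\det g|_Q|$ as small as about $a_1a_n$, and your candidates $\mathrm{span}(\nu',e_1)$, $\mathrm{span}(\nu',e_2)$ lie inside $P$ because $\nu'\in P$.

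Restricted to $P$, the hypothesis only says that unit $x\in P$ with $|\langle x,\nu_P\rangle|\ge\epsilon$ are sent $\epsilon$-close to $[w]$, where $\nu_P$ is the projection to $P$ of the unit normal $\nu$ of $H'$. This is a two-dimensional contraction property with threshold $\epsilon/\|\nu_P\|$. Take $\mathrm{diag}(1,\lambda)$ on $P$, with $w=e_1$ and $\nu_P$ proportional to $e_1$: it satisfies this property whenever $\lambda\le\epsilon^2/\sqrt{\|\nu_P\|^2-\epsilon^2}$. Consequently:
\begin{enumerate}
\item information confined to $P$ cannot give a uniform $O(\epsilon^2)$ bound;
\item it gives nothing at all when $\|\nu_P\|\le\epsilon$;
\item your own crude bound $a_2/a_1\le 2\epsilon$ already needs $\|\nu_P\|\ge\sqrt2\,\epsilon$, since otherwise $P$ has no orthogonal admissible pair.
\end{enumerate}
You would first have to prove from the hypothesis that $\nu$ lies almost in $P$. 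Likewise, whether vectors ``almost parallel to $e_2$'' are at distance $\ge\epsilon$ from $H'$, and have $\|gx\|\|gy\|$ of order $\epsilon^2a_1^2$, depends on where $H'$ sits relative to $e_1,e_2$. No such localisation of $H'$ appears in your proposal. Moreover, the diameter bound $d([gx],[gy])\le 2\epsilon$ has already discarded the point $w$, and $w$ is exactly what makes localisation unnecessary.

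The missing idea, which is how the paper proceeds, is to use $w$ and the unit normal $u$ of $H'$ directly; this works whatever the position of $H'$.
\begin{enumerate}
\item Take a unit $v\in P$ with $\langle v,gw\rangle=0$. Since $g$ is self-adjoint, $gv\perp w$, so $d([gv],[w])=1$; also $\|gv\|\ge a_2$. The hypothesis then forces $|s|<\epsilon$ for $s=\langle u,v\rangle=e^{i\theta}|s|$.
\item Write $v=su+\alpha_{|s|}y$ with $y\perp u$ a unit vector and $\alpha_t=\sqrt{1-t^2}$. Apply the hypothesis to $u$, giving $\|gu\wedge w\|\le\epsilon\|gu\|$. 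Apply it also to $v_\epsilon=e^{i\theta}\epsilon u+\alpha_\epsilon y$, which satisfies $|\langle u,v_\epsilon\rangle|=\epsilon$.
\item One has $gv_\epsilon=c\,gu+c'\,gv$ with $|c|\le\epsilon$ and $\alpha_\epsilon\le c'\le 1$. Hence $\|gv_\epsilon\wedge w\|\ge c'\|gv\|-\epsilon^2\|gu\|$ and $\|gv_\epsilon\|\le\epsilon\|gu\|+c'\|gv\|$.
\item The inequality $\|gv_\epsilon\wedge w\|\le\epsilon\|gv_\epsilon\|$ then yields $\alpha_\epsilon(1-\epsilon)a_2\le 2\epsilon^2 a_1$, i.e.\ $a_2/a_1<4\epsilon^2$ for $\epsilon<1/4$.
\end{enumerate}
The factor $\epsilon^2$ is the coefficient $|c|\le\epsilon$ times the distance $\le\epsilon$ from $[gu]$ to $[w]$. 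No determinant is involved, and no information on $H'$ relative to $P$ is needed.
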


\proof
The (easier) first assertion was proved in full in \cite{breuillard-gelander}. The converse was only given a proof in the non-archimedean case. We now supply a proof of the converse assertion in the archimedean case, that is when $k$ is $\mathbb{R}$ or $\mathbb{C}$.

Without loss of generality we may assume $g$ is a diagonal matrix $g=diag(a_1,\ldots,a_n)$ with all $a_i>0$. Let $w\in k^n$ be a unit vector representing the attracting point $v_g$. We may find a unit vector $u\in k^n$ normal to $H_g$. The fact that $g$ is $\epsilon $%
-contracting means that for every $v\in k^{n}$%
\begin{equation}
|\langle u, v \rangle| \geq \epsilon \left\| v\right\| \Rightarrow \left\|
gv\wedge w\right\| \leq \epsilon \left\| gv\right\| .  \label{cont}
\end{equation}
Plugging $v=u$ in \eqref{cont} yields \begin{equation}\|gu \wedge w\| \leq \epsilon \|gu\|. \label{cont0}
\end{equation}

We now choose a unit vector $v$ which is orthogonal to $gw$ and lies in the plane spanned by $e_1$ and $e_2$. This is clearly possible for the intersection of a $2$-plane and a hyperplane is non-trivial. The latter condition guarantees that $\|gv\| \ge a_2$ and the former that $gv$ and $w$ are orthogonal (note that $g$ is self-adjoint). Then \eqref{cont} forces $|\langle u, v \rangle| <\epsilon$ because $\epsilon <1$. 

For $t\in [0,1]$ set $\alpha_t=\sqrt{1-t^2}$.  We then write $v=su+\alpha_{|s|}y$ for some unit vector $y$ orthogonal to $u$ and $s:=\langle u,v\rangle \in \mathbb{C}$. Write $s=e^{i\theta}|s|$ and set $v_\epsilon:=e^{i\theta}\epsilon u + \alpha_{\epsilon}y$, a unit vector. By \eqref{cont} applied to $v_\epsilon$, we see that \begin{equation}\label{cont2}\|gv_\epsilon\wedge w\|\leq \epsilon \|gv_\epsilon\|. \end{equation}
 But $$gv_\epsilon=  e^{i\theta}\epsilon gu+\alpha_{\epsilon}gy = e^{i\theta}(\epsilon-|s|\alpha_{\epsilon} \alpha_{|s|}^{-1})gu+\alpha_{\epsilon} \alpha_{|s|}^{-1} gv$$ 
 Note that $|s|<\epsilon$ and thus $\epsilon-|s|\alpha_{\epsilon} \alpha_{|s|}^{-1}$ lies in $(0,\epsilon]$. In particular
 \begin{equation}\label{cont3}\|gv_\epsilon\| \leq  \epsilon \|gu\| + \alpha_{\epsilon} \alpha_{|s|}^{-1} \|gv\| \end{equation}
  However 
 $$gv_\epsilon \wedge w = e^{i\theta}(\epsilon-|s|\alpha_{\epsilon} \alpha_{|s|}^{-1}) gu \wedge w + \alpha_{\epsilon} \alpha_{|s|}^{-1} gv \wedge w.$$
Using \eqref{cont0} and the fact that $w$ and $gv$ are orthogonal, we thus get:
 \begin{equation}\label{cont4}\|gv_\epsilon \wedge w \| \ge    \alpha_{\epsilon} \alpha_{|s|}^{-1}  \|gv\| -   \epsilon^2 \|gu\|.\end{equation}
 Combining inequalities \eqref{cont3} and \eqref{cont4} with \eqref{cont2} we obtain:
 $$  \alpha_{\epsilon} \alpha_{|s|}^{-1}  \|gv\| -   \epsilon^2 \|gu\| \leq \epsilon^2\|gu\| +  \epsilon  \alpha_{\epsilon} \alpha_{|s|}^{-1}  \|gv\|$$
 Rearranging:
 $$\alpha_\epsilon \alpha_{|s|}^{-1} (1-\epsilon) \|gv\|\leq 2\epsilon^2 \|gu\|$$
 But $\|gv\|\ge a_2$, $\|gu\| \leq a_1$ and $\alpha_{|s|}^{-1} \ge 1$.  Moreover by a simple check we see that $\alpha_\epsilon (1-\epsilon) >\frac{1}{2}$ if $\epsilon<\frac{1}{4}$. This ends the proof.
\endproof

\vspace{.7cm}

\noindent \emph{Acknowledgements.} The authors would like to thank Alexandru Chirvasitu for kindly pointing out a problem in the proof of \cite[Proposition 2.7]{breuillard-gelander}. T.G. acknowledges the support of the NSF Award No. 2505196 and the BSF Award No. 2024317. E.B. acknowledges the support of UKRI award No. 1017.  For the purpose of Open Access, the authors have applied a CC BY public copyright license to any Author Accepted Manuscript  version arising from this submission.

\end{document}